\newtheorem{thm}{Theorem}[]
\newtheorem{lmm}[thm]{Lemma}
\newtheorem{prp}[thm]{Proposition}
\theoremstyle{definition}
\newtheorem{dfn}[thm]{Definition}
\theoremstyle{remark}
\newtheorem*{rmk}{Remark}
\DeclareMathOperator{\Diff}{\bf Diff}
\DeclareMathOperator{\Top}{\bf Top}
\newcommand{\bI}{\partial I}
\newcommand{\abs}[1]{\lvert{#1}\rvert}
\title{On homotopy types of diffeological cell complexes}
\author[T. Haraguchi]{Tadayuki Haraguchi} 
\address
{Faculty of Education for Human Growth,
Nara Gaken University,
Nara 636-8503, Japan}
\email{t-haraguchi@naragakuen-u.jp}
\author[K. Shimakawa]{Kazuhisa Shimakawa}
\address
{Graduate School of Natural Science and Technology,
Okayama University,
Okayama 700-8530, Japan}
\email{kazu@math.okayama-u.ac.jp}
\date{\today}
\subjclass[2010]{Primary 57R12 ; Secondary 57R19, 58A05}
\keywords{diffeological space, $D$-Topology, partition of unity, smooth cell complex, Whitney approximation}
\begin{document}
\maketitle
\begin{abstract}
{
We introduce the notion of smooth cell complexes
  and its subclass consisting of gathered cell complexes within the category of diffeological spaces
  (cf.~Definitions \ref{dfn:relative cell complex} and \ref{dfn:gathered cell complex}).
  It is shown that the following hold.}

\begin{enumerate}
\item
{
  With respect to the $D$-topology,
  every smooth cell complex is a topological cell complex (cf.~Proposition~\ref{prp:topological cell complex}).
  It is paracompact and Hausdorff if it is countable (cf.~Proposition~\ref{prp:paracompactum}).}
\item
Every continuous map between gathered cell complexes is continuously homotopic to a smooth map (cf.~Theorem \ref{apprximate on cell}).
\item
Any topological cell complex is 
continuously homotopy equivalent to a gathered (hence smooth) cell complex (cf.~Theorem \ref{thm:smooth up}).
\item
Every $D$-open cover of a smooth countable cell complex has 
a subordinate partition of unity by smooth functions (cf.~Theorem \ref{thm:partition of unity}).
\end{enumerate}
\end{abstract}
\section{Smooth relative cell complexes}
In this section we give the notion of smooth cell complexes and its fundamental properties.
To make the statement more precise, we shall work in the category $\Diff$
of diffeological spaces. As described in \cite{Zem}, the category $\Diff$ is complete,
cocomplete and cartesian closed. Moreover, there is a left adjoint functor $T$
from $\Diff$ to the category $\Top$ of topological spaces which assigns to every
diffeological space $X$ its underlying space equipped with $D$-topology (cf.~{\cite[Proposition 2.1]{SYH}}).
Throughout the paper,
$\mathbf{R}^{n}$ denotes the $n$-dimensional Euclidean space equipped with the standard diffeology consisting of 
all smooth parametrizations of $\mathbf{R}^{n}$.
Denote by $\partial I^{n}$ the boundary of the $n$-dimensional cube $I^{n}$,
and let
\[
L^{n-1}= \partial I^{n-1} \times I \cup I^{n-1} \times \{0\}\ \mbox{and}\
J^{n-1}= \partial I^{n-1} \times I \cup I^{n-1} \times \{1\}
\]
for each $n \geq 1$.
We regard $I^{n},\ \partial I^{n},\ L^{n-1}$ and $J^{n-1}$ as subspaces of $\mathbf{R}^{n}$.

{
  Let $\mathcal C$ be a category with all small colimits and $\delta$ an ordinal.
}
A colimit preserving functor $Z \colon \delta \to \mathcal{C}$ 
is called a $\delta$-sequence (cf.~{\cite[Definition 2.1.1]{Hov}}) if
for all limit ordinals $\beta < \delta$,
the induced map
\[
\mbox{colim}_{\alpha < \beta} Z_{\alpha} \to Z_{\beta}
\]
is an isomorphism.

Given two smooth maps $f \colon Y \to Z$ and $g \colon Y^{\prime} \to Z$ between diffeological spaces,
we denote by $ f \cup g$ the composition
\[
\bigtriangledown \circ (f \coprod g) \colon Y \coprod Y^{\prime} \to Z \coprod Z \to Z,
\]
where $\bigtriangledown$ is the foloding map of $Z \coprod Z$ onto $Z$.
\begin{dfn}[{\cite[Definition 5.12]{HS}}]\label{dfn:relative cell complex}
Let $(X,A)$ be a pair of diffeological spaces.
We say that $(X,A)$ is a smooth relative cell complex if there is an ordinal $\delta$ and 
a $\delta$-sequence $Z \colon \delta \to \Diff$ such that the inclusion $i \colon A \to X$ coincides with the 
composition $Z_{0} \to \mbox{colim}Z$ and that for each $\beta$ such that $\beta+1<\delta$,
there exists a pushout square
\[
\xymatrix{
\partial I^{n_{\beta+1}}
\ar[r]^{\phi_{\beta+1}}
\ar[d]
&
Z_{\beta}
\ar[d]^{i_{\beta+1}}
\\
I^{n_{\beta+1}}
\ar[r]_{\Phi_{\beta+1}}
&
Z_{\beta+1},
}
\]
that is,
$Z_{\beta+1}$ is an adjunction space $Z_{\beta} \cup_{\phi_{\beta+1}} I^{n_{\beta+1}}$.
Then $i_{\beta+1} \cup \Phi_{\beta+1}$ coincides with the subduction from $Z_{\beta} \coprod I^{n_{\beta+1}}$ to $Z_{\beta+1}$.
We call
$\phi_{\beta+1}$ and $\Phi_{\beta+1}$ attaching and characteristic maps,
respectively.
We say that $X$ is a smooth cell complex if $A$ is the empty set.
\end{dfn}

\begin{rmk}
{
  N.~Iwase and N.~Izumida \cite{Iwase} intoroduces smooth CW complexes by using characteristic maps
  from the unit disks instead of cubes.
  On the other hand, H.~Kihara \cite{Kihara} uses the notion of smooth simplicial cell complexes when discussing
  the model structure of diffeological spaces.}
\end{rmk}

{
  Let $T \colon \Diff \to \Top$ be the functor which assigns to every
  diffeological space the underlying space having the $D$-topology
  associated with $X$.  }

\begin{prp}\label{prp:topological cell complex}
Let $(X,A)$ be a smooth relative cell complex given by 
a $\delta$-sequence $Z \colon \delta \to \Diff$.
Then its image $(TX, TA)$ under $T \colon \Diff \to \Top$ is a topological relative cell complex
{
in the sense of {\cite[Chapter 2.4]{Hov}}.}
\end{prp}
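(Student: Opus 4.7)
The proof is essentially formal once we know that $T$ is colimit-preserving (because it is a left adjoint, as recalled in the paper) and that it sends the diffeological pair $(I^n,\partial I^n)$ to $(I^n,\partial I^n)$ equipped with its standard Euclidean topology. The plan is to apply $T$ to the $\delta$-sequence $Z$ and to each of its pushout squares, and to read off a topological cell structure on $(TX,TA)$.

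First I would set $W = T\circ Z \colon \delta \to \Top$ and verify that it is a $\delta$-sequence. Since $T$ preserves small colimits, for every limit ordinal $\beta<\delta$ the canonical map $\colim_{\alpha<\beta}W_\alpha = T(\colim_{\alpha<\beta}Z_\alpha) \to T(Z_\beta) = W_\beta$ is an isomorphism; moreover $\colim W = T(\colim Z) = TX$, $W_0 = T(Z_0) = TA$, and the induced map $W_0 \to \colim W$ is $T(i)$. Applying $T$ to each successor pushout square in the defining data of $Z$ then produces a pushout square in $\Top$ whose left-hand edge is $T$ of the inclusion $\partial I^{n_{\beta+1}}\hookrightarrow I^{n_{\beta+1}}$.

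Next I would identify $T(I^n)$ and $T(\partial I^n)$ with their standard Euclidean incarnations, and the map between them with the standard inclusion. The $D$-topology of $\mathbf{R}^n$ with its standard diffeology is the Euclidean topology. Since every plot of $I^n$ is by definition smooth as a map into $\mathbf{R}^n$, it is Euclidean-continuous, so the $D$-topology on $I^n$ refines its Euclidean subspace topology. Conversely, near each point of $I^n$ one can construct enough plots into each face and into the interior to conclude that every $D$-open set is Euclidean-open. The analogous argument treats $\partial I^n$.

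The main subtlety is this last step: in general, the $D$-topology of a diffeological subspace can be strictly finer than the subspace topology coming from the ambient $D$-topology, and one has to check that this pathology does not arise for the cube and its boundary, a fact I expect to be either classical or available from one of the references already cited. Once it is in place, the standard homeomorphism of pairs $(I^n,\partial I^n)\cong(D^n,S^{n-1})$ converts each pushout square above into the attachment of a topological $n$-cell in the sense of \cite[Chapter 2.4]{Hov}, and the presentation of $TX$ by the transfinite composition $W$ exhibits $(TX,TA)$ as a topological relative cell complex.
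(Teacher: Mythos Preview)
Your proposal is correct and follows essentially the same route as the paper: apply the colimit-preserving functor $T$ to the $\delta$-sequence and its pushout squares, and identify $(TI^n,T\partial I^n)$ with the standard topological pair. The paper dispatches the one subtle point you flagged by citing \cite[Lemma~3.17]{DGE} for the homeomorphism of pairs, rather than arguing it by hand.
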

\begin{proof}
For each $\beta$ such that $\beta+1 < \delta$,
the pair $(TI^{n_{\beta+1}}, T\partial I^{n_{\beta+1}})$ is homeomorphic to 
the pair $(I^{n_{\beta+1}}, \partial I^{n_{\beta+1}})$ of topological subspaces of $\mathbf{R}^{n_{\beta+1}}$
by {\cite[Lemma 3.17]{DGE}}.
Since the functor $T$ preserves colimits,
we have a pushout square
\[
\xymatrix{
\partial I^{n_{\beta+1}}
\ar[d]
\ar[r]^{T\phi_{\beta+1}}
&
TZ_{\beta}
\ar[d]^{Ti_{\beta+1}}
\\
I^{n_{\beta+1}}
\ar[r]_{T\Phi_{\beta+1}}
&
TZ_{\beta+1}.
}
\]
Therefore, 
$TZ \colon \delta \to \Top$ is a $\delta$-sequence such that 
the inclusion $T i \colon TA \to TX$ coincides with the composition $TZ_{0} \to \mbox{colim}TZ$.
\end{proof}

\begin{rmk}
Let $A$ be a subspace of a diffeological space $X$.
In general,
the $D$-topological space $TA$ is not a topological subspace of $TX$ (cf.~{\cite[3.3]{DGE}}).
{
But in the situation of} Proposition~\ref{prp:topological cell complex}, the $D$-topological space $TZ_{\beta}$ 
is a subspace of $TX$,
for every $\beta < \delta$. 
Thus we can discuss the $D$-topology on smooth relative cell complexes just as in the case of 
topological relative cell complexes.
\end{rmk}

{
  Recall from \cite[Definition 3.8]{HS} that a smooth map from a cubical
  subcomplex $K$ of $I^n$ (e.g.\ $I^n$, $\bI^n$, $L^{n-1}$ or $J^{n-1}$) to
  a diffeological space $X$ is called to be \emph{$\epsilon$-tame} if we
  have
  \[
    f(t_1,\cdots,t_{j-1},t_j,t_{j+1},\cdots,t_n) =
    f(t_1,\cdots,t_{j-1},\alpha,t_{j+1},\cdots,t_n)
  \]
  for every $(t_1,\cdots,t_n) \in K$ and $\alpha \in \{0,1\}$ such that
  $\abs{t_j - \alpha} \leq \epsilon$ holds.  Moreover a smooth homotopy
  $H \colon Y \times I \to Z$ between diffeological spaces is called
  $\epsilon$-\emph{tame}, where $0 < \epsilon \leq 1/2$, if we have
  \[
    H(y,t)=H(y,\alpha)
  \]
  for each $(y,t) \in Y \times I$ such that $|t - \alpha|\leq \epsilon$.
  We use the abbreviation ``tame'' to mean $\epsilon$-tame for some
  $\epsilon > 0$.
}

%

{
  \begin{dfn}\label{dfn:gathered cell complex}
    A smooth relative cell complex is called to be \emph{gathered} if its all attaching maps are tame.
  \end{dfn}
}

{
  The following proposition plays a key role in the proofs of our main results.  }

\begin{prp}[{\cite[Proposition 6.9]{HS}}]\label{approximation on cube}
Let $X$ be a gathered cell complex,
and let $L$ be a cubical subcomplex of $I^{n}$.
Let $f$ be a continuous map form $I^{n}$ to $TX$ such that 
its ristriction to $L$ is a smooth tame map.
Then there exists a tame map $g \colon I^{n} \to X$ such that 
$f$ and $Tg$ are continuously homotopic relative to $L$.
\end{prp}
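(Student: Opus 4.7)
The plan is to prove the statement by transfinite induction on the cell filtration of $X$, using compactness of $I^{n}$ to reduce to a finite stage and a Whitney-type smooth approximation run carefully to preserve tameness. First, since $TX$ is a topological cell complex by Proposition~\ref{prp:topological cell complex} and $I^{n}$ is compact, the image $f(I^{n})$ meets only finitely many open cells and therefore lies in $TZ_{\beta}$ for some $\beta<\delta$. Hence it suffices to prove the assertion when $X=Z_{\beta}$, by transfinite induction on $\beta$. The limit step is immediate from the same compactness argument, and the base case $\beta=0$ is vacuous.

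For the successor step, write $Z_{\beta+1}=Z_{\beta}\cup_{\phi} I^{m}$ with attaching map $\phi=\phi_{\beta+1}$ that is $\epsilon$-tame for some $\epsilon>0$, and let $\Phi=\Phi_{\beta+1}$ be the corresponding characteristic map. I would thicken $\bI^{m}$ to the closed $\epsilon$-collar $C_{\epsilon}\subset I^{m}$; by tameness, $\Phi$ factors through the smooth quotient of $I^{m}$ that collapses $C_{\epsilon}$ onto $\bI^{m}$ via the coordinate-wise retraction. Cover $I^{n}$ by the two closed subsets $A=f^{-1}(TZ_{\beta}\cup\Phi(C_{2\epsilon/3}))$ and $B=f^{-1}(\Phi(I^{m}\setminus C_{\epsilon/3}))$, which partition the problem into a ``core cell'' piece and a ``rest of complex'' piece.

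On $B$, the map $f$ lifts canonically through $\Phi$ to a continuous map $\tilde f\colon B\to I^{m}$, to which one applies a Whitney-type smooth approximation rel the already-smooth part $L\cap B$, and then pushes the result forward along $\Phi$. On $A$, the tameness of $\phi$ first lets us homotope $f|_{A}$ to a map that factors through $Z_{\beta}$, and then the inductive hypothesis provides a smooth tame approximation $g_{A}\colon A\to Z_{\beta}\subset Z_{\beta+1}$. These two smooth tame maps are glued over $A\cap B$ by a tame cubical partition of unity; the two approximations can be arranged to agree up to tame homotopy on the overlap (both land in a common smooth cube away from $\phi(\bI^{m})$), and the gluing yields the desired global approximation $g$ together with a continuous homotopy from $f$ to $Tg$.

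The principal technical obstacle is arranging that the final map $g$ is itself tame, and not merely smooth. This is handled by performing each Whitney approximation with a smooth cutoff that fixes $f$ inside a thin collar near $\bI^{n}$ and inside a thin collar of the preimage of $\phi(\bI^{m})$, so that the approximation is tame by construction. The tame product structure inherited from $\phi$'s tameness is what guarantees that the changes to $f$ are confined to a region where the target is genuinely a smooth manifold-with-corners, where the classical Whitney theorem applies without further modification. The relative-to-$L$ condition is preserved throughout, since every approximation and homotopy in the argument is performed rel the closed set where $f$ is already smooth and tame.
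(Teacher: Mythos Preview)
The paper does not actually prove this proposition: it is quoted verbatim from \cite[Proposition~6.9]{HS} and used as a black box in the subsequent arguments. There is therefore no in-paper proof to compare your proposal against.

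That said, your sketch has two genuine gaps that would need substantial work to close.

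First, the inductive hypothesis you set up is a statement about maps $I^{n}\to Z_{\beta}$ whose restriction to a \emph{cubical subcomplex} $L$ is smooth and tame. In the successor step you want to apply it to $f|_{A}\colon A\to Z_{\beta}$, but $A=f^{-1}(TZ_{\beta}\cup\Phi(C_{2\epsilon/3}))$ is an arbitrary closed subset of $I^{n}$, not a cube, and $L\cap A$ is not a cubical subcomplex of anything. The inductive hypothesis simply does not apply here. The usual remedy in arguments of this type is to first subdivide $I^{n}$ into small cubes so that each little cube maps either into $TZ_{\beta}$ or into the open cell $\Phi(\mathrm{Int}\,I^{m})$, and then run the induction cube-by-cube; your cover $\{A,B\}$ skips this and so loses the cubical structure the induction needs.

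Second, the gluing step is not well-defined. You propose to patch $g_{A}$ and the smoothed lift on $B$ ``by a tame cubical partition of unity,'' but the target $Z_{\beta+1}$ is a diffeological cell complex, not a vector space or a convex set, so convex combinations of the two maps make no sense. The phrase ``both land in a common smooth cube away from $\phi(\partial I^{m})$'' is asserted but not arranged anywhere in the construction; on the overlap $A\cap B$ the map $g_{A}$ takes values in $Z_{\beta}$ while the other approximation takes values in $\Phi(\mathrm{Int}\,I^{m})$, and these need not coincide. What is actually needed is a careful homotopy-extension argument (or a cell-by-cell construction in which the new approximation literally extends the old one on each face), not a partition-of-unity average.
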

%
%
\section{The Whitney apprximation on smooth cell complexes}
The classical Whitney approximation theorem states that any continuous map between smooth manifolds is homotopic to a smooth map (cf.~{\cite[Theorem 6.19]{Lee}}).
In this section we will extend this theorem on gathered cell complexes.

\begin{thm}\label{apprximate on cell}
Let $(X,A)$ and $(Y,B)$ be relative gathered cell complexes.
Let $f $ be a continuous map from $(TX,TA)$ to $(TY,TB)$ such that its restriction to $A$ is a smooth map.
Then there exists a smooth map $g \colon (X,A) \to (Y,B)$ such that $f$ and $Tg$ are continuously homotopic relative to $TA$.
\end{thm}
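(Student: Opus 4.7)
The plan is to proceed by transfinite induction along the $\delta$-sequence $Z \colon \delta \to \Diff$ presenting $(X,A)$, inductively building a smooth map $g_\beta \colon Z_\beta \to Y$ together with a continuous homotopy $H_\beta \colon TZ_\beta \times I \to TY$ from $f|_{TZ_\beta}$ to $Tg_\beta$ which is stationary on $TA$, arranged so that both $g_{\beta+1}$ and $H_{\beta+1}$ strictly extend $g_\beta$ and $H_\beta$. At $\beta = 0$ I would take $g_0 = f|_A$, which is smooth by hypothesis and lands in $B$ because $f$ is a map of pairs, and let $H_0$ be the constant homotopy. At limit ordinals I would pass to the colimit; this produces a well-defined continuous homotopy because $T$ preserves colimits and, by Proposition~\ref{prp:topological cell complex}, the filtration $TZ_\bullet$ presents $TX$ as a topological relative cell complex, so crossing with $I$ still respects the pushout presentation in $\Top$.

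The heart of the argument is the successor step. Suppose $g_\beta$ and $H_\beta$ are built, and let $Z_{\beta+1} = Z_\beta \cup_\phi I^n$ with $\phi = \phi_{\beta+1}$; this $\phi$ is tame because $(X,A)$ is gathered. Pulling $f$ back along the characteristic map gives a continuous cube map $f_D = f \circ T\Phi_{\beta+1} \colon I^n \to TY$, restricting on $\partial I^n$ to $f \circ T\phi$, and pulling $H_\beta$ back along $T\phi$ yields a continuous homotopy
\[
h_{\partial D} = H_\beta \circ (T\phi \times \id_I) \colon \partial I^n \times I \to TY
\]
from $f \circ T\phi$ to $T(g_\beta \circ \phi)$. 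Here $g_\beta \circ \phi$ is smooth (composition of smooth maps) and tame (the composition of a tame map with a smooth one is tame), so its image provides boundary data of the form required by Proposition~\ref{approximation on cube}.

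Next I would invoke the homotopy extension property of the CW pair $(I^n, \partial I^n)$ to extend $h_{\partial D}$, starting from $f_D$, to a continuous homotopy $\tilde H \colon I^n \times I \to TY$ with $\tilde H|_{I^n \times \{0\}} = f_D$ and $\tilde H|_{\partial I^n \times I} = h_{\partial D}$. Setting $\tilde f = \tilde H|_{I^n \times \{1\}}$ gives a continuous map whose boundary restriction is the smooth tame map $T(g_\beta \circ \phi)$. Proposition~\ref{approximation on cube}, applied with $L = \partial I^n$, then supplies a tame map $g_D \colon I^n \to Y$ with $g_D|_{\partial I^n} = g_\beta \circ \phi$ together with a continuous homotopy from $\tilde f$ to $Tg_D$ relative to $\partial I^n$. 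Because $g_D$ agrees with $g_\beta$ on the attaching map, the universal property of the pushout defining $Z_{\beta+1}$ produces a smooth extension $g_{\beta+1} \colon Z_{\beta+1} \to Y$, and concatenating $\tilde H$ with the rel $\partial I^n$ homotopy $\tilde f \simeq Tg_D$ and then pasting the result along the pushout with $H_\beta$ yields $H_{\beta+1}$, which by construction remains stationary on $TA$.

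The hard part will not be any individual cell attachment---that is essentially the content of Proposition~\ref{approximation on cube} combined with HEP---but rather the transfinite bookkeeping needed to guarantee that the $H_\beta$ fit together strictly enough that the colimit is a genuine continuous map on all of $TX \times I$, and the mild subtlety that Proposition~\ref{approximation on cube} is stated for absolute gathered cell complexes. The latter should reduce to the absolute case because each $g_D$ produced in the induction meets only finitely many cells of $Y$, or alternatively by checking that the proof of Proposition~\ref{approximation on cube} goes through verbatim in the relative setting.
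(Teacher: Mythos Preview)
Your overall strategy---transfinite induction along the $\delta$-sequence, an HEP-type extension together with Proposition~\ref{approximation on cube} at successors, colimits at limits---is exactly the paper's. The gap is in the successor step, at the moment you ``paste the result along the pushout with $H_\beta$.'' Your concatenated homotopy on the cell (first $\tilde H$, then the rel $\partial I^n$ homotopy $\tilde f\simeq Tg_D$) restricts on $\partial I^n\times I$ to $h_{\partial D}$ run over $[0,1/2]$ followed by the constant homotopy at $T(g_\beta\circ\phi)$ on $[1/2,1]$. This is not $h_{\partial D}$ itself, so it does not agree with $H_\beta\circ(T\phi\times\id_I)$ and the pushout gluing fails. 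Reparametrising $H_\beta$ to match would destroy the strict extension $H_{\beta+1}|_{TZ_\beta\times I}=H_\beta$ that you yourself flag as essential at limit ordinals.

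The paper's remedy is to carry, as part of the induction hypothesis, that each $h_\beta$ is a \emph{tame} homotopy (constant for $|t-\alpha|\le\epsilon$, $\alpha\in\{0,1\}$). The concatenation is then performed only on the interval $[1-\sigma/2,\,1]$ for some $\sigma<\epsilon$: since $h_\beta$ is already constant there, the boundary restriction of the concatenated homotopy equals $h_\beta\circ(T\phi\times\id_I)$ on the nose, and $h_{\beta+1}$ strictly extends $h_\beta$. To keep tameness propagating, the paper replaces the bare topological HEP by a $\sigma$-tame $\epsilon$-approximate retraction $R_\epsilon\colon I^n\times I\to L^n$ (cf.\ \cite[Lemma~3.12]{HS}) and applies Proposition~\ref{approximation on cube} to the $(n{+}1)$-cube $I^n\times I$ with subcomplex $\partial I^n\times\{1\}$, rather than to $I^n$ alone; the output is then a tame map $G'\colon I^n\times I\to Y$ whose top slice $G'|_{t=1}$ is the desired smooth replacement on the cell and whose body already furnishes a tame homotopy to it. This tameness bookkeeping is precisely the ``hard part'' you anticipated but did not build into the construction.
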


\begin{proof}
Let $(X,A)$ be a relative gathered cell complex given by a $\delta$-sequence $Z \colon \delta \to \Diff$.
Starting from the trivial homotopy of $g_{0}=f|A$,
we inductively construct a smooth map $g_{\beta} \colon (Z_{\beta},A) \to (Y,B)$ and 
a continuous homotopy $h_{\beta} \colon TZ_{\beta} \times I \to TY$ between $f|TZ_{\beta}$ and $Tg_{\beta}$ relative to $A$.

For each $\beta$ such that $\beta+1<\delta$,
let $\phi_{\beta+1} \colon \partial I^{n_{\beta+1}}\to Z_{\beta}$ and 
$\Phi_{\beta+1} \colon I^{n_{\beta+1}} \to Z_{\beta+1}$ be an attaching and a characteristic maps,
respectively.
Suppose we have a smooth map $g_{\beta} \colon Z_{\beta} \to Y$ and a continuous tame homotopy 
$h_{\beta} \colon TZ_{\beta} \times I \to TY$ between $f|TZ_{\beta}$ and $Tg_{\beta}$ relative to $A$.
Then put 
$G_{\beta}=h_{\beta} \circ (\phi_{\beta+1} \times 1) \cup f \circ \Phi_{\beta+1} \colon L^{n_{\beta+1}} \to TY$.
Let us take such $\epsilon >0$ that $\phi_{\beta+1}$ and $h_{\beta}$ are $\epsilon$-tame,
and let $R_{\epsilon} \colon I^{n_{\beta+1}} \times I \to L^{n_{\beta+1}}$ be an $\epsilon$-approximate retraction 
(cf.~{\cite[Lemma 3.12]{HS}}) which is $\sigma$-tame ($\sigma < \epsilon$).
Then the composition $G_{\beta} \circ R_{\epsilon} \colon I^{n_{\beta+1}} \times I \to TY$
coincides with $G_{\beta}$ on $\partial I^{n_{\beta+1}} \times I$.
But then,
there exist by Proposition \ref{approximation on cube} a tame map $G^{\prime} \colon I^{n_{\beta+1}} \times I \to Y$
and a continuous tame homotopy $K \colon I^{n_{\beta+1}} \times I \times I \to TY$ between 
$G_{\beta} \circ R_{\epsilon}$ and $TG^{\prime}$ relative to $\partial I^{n_{\beta+1}} \times \{1\}$.
In fact,
$K(v,1,t)=g_{\beta}(\phi_{\beta+1}(v))$ holds for $(v,1,t) \in \partial I^{n_{\beta+1}} \times \{1\} \times I$.
Thus we get a $\sigma$-tame $g^{\prime}=G^{\prime}|I^{n_{\beta+1}} \times \{1\} \colon I^{n_{\beta+1}} \to Y$ and 
a tame homotopy $\tilde{G} \colon I^{n_{\beta+1}} \times I \to TY$ from $f \circ \Phi_{\beta+1}$ to $g^{\prime}$ 
given by the formula
\[
\tilde{G}(s,t)=
\left\{
\begin{array}{lll}
G_{\beta}\circ R_{\epsilon}(s,t)
&
0 \leq t \leq 1-\sigma/2
\\
K(s,1,(2t-2+\sigma)/\sigma)
&
1-\sigma/2 \leq t \leq 1.
\end{array}
\right.
\]
Now,
we have a diagram
\[
\xymatrix@C=50pt{
(TZ_{\beta} \times I) \coprod (I^{n_{\beta+1}} \times I)
\ar[r]^(0.7){h_{\beta} \cup \tilde{G}}
\ar[d]
&
TY
\\
TZ^{\beta+1} \times I.
\ar@{.>}[ru]
}
\]
Since the vertical arrow is a quotient map,
there exists a continuous tame homotopy $h_{\beta+1} \colon TZ_{\beta+1} \times I \to TY$ making the diagram
above commutative.
Clearly,
$h_{\beta+1}$ gives $f|TZ_{\beta+1} \simeq Tg_{\beta+1}$ rel $TA$,
where $g_{\beta+1} \colon Z_{\beta+1} \to Y$ is a smooth tame map induced by the map 
$
g_{\beta} \cup g^{\prime} \colon Z_{\beta} \coprod I^{n_{\beta+1}} \to Y.
$

{
  Suppose now that $\beta$ is a limit ordinal of $\delta$.  Then we have
  $Z_{\beta} \cong \mbox{colim}_{\alpha < \beta} Z_{\alpha}$.  }%
For each $\alpha < \beta$, suppose there exists a smooth map
$g_{\alpha} \colon Z_{\alpha} \to Y$ and a continuous homotopy
$F_{\alpha} \colon TZ_{\alpha} \times I \to Y$ between $f|TZ_{\alpha}$ and
$Tg_{\alpha}$ such that for each $\tilde{\alpha} < \alpha$,
\[
g_{\alpha}|Z_{\tilde{\alpha}}=g_{\tilde{\alpha}}\colon Z_{\tilde{\alpha}} \to Y\ 
\mbox{and}\ 
F_{\alpha}|TZ_{\tilde{\alpha}} \times I=F_{\tilde{\alpha}} \colon TZ_{\tilde{\alpha}} \times I \to TY
\]
hold.
Since the functor $T$ preserves the colimits,
we have
\[
\mbox{colim}_{\alpha<\beta}(TZ_{\alpha} \times I) \cong (\mbox{colim}_{\alpha < \beta} TZ_{\alpha}) \times I
\cong TZ_{\beta} \times I.
\]
Thus we get a smooth map $g_{\beta}=\mbox{colim}_{\alpha<\beta}g_{\alpha} \colon Z_{\beta} \to Y$ and 
a continuous homotopy $F_{\beta}=\mbox{colim}_{\alpha<\beta} F_{\alpha} \colon TZ_{\beta} \times I \to TY$
between $f|TZ_{\beta}$ and $Tg_{\beta}$.
This completes the induction step and proves Theorem \ref{apprximate on cell}.
\end{proof}
%
%
{
  \section{Smoothing of topological cell complexes}
  In this section we show that every topological cell complex is
  continuously homotopy equivalent to a smooth cell complex.  (Compare
  \cite[Theorem A.1]{Iwase}.)  }
\begin{thm}\label{thm:smooth up}
Let $X^{\prime}$ be a topological cell complex given by a $\delta$-sequence $Z^{\prime} \colon \delta \to \Top$.
Then there exists a gathered cell complex $X$ such that $X^{\prime}$ is 
continuously homotopy equivalent to $TX$.
\end{thm}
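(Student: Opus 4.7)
The plan is to build the gathered cell complex $X$ by transfinite recursion along the same ordinal $\delta$, producing simultaneously a compatible system of continuous homotopy equivalences $\eta_\beta \colon Z'_\beta \to TZ_\beta$; we start with $Z_0 = \emptyset$ and $\eta_0 = \id$, and at stage $\delta$ take $X := Z_\delta$ and $\eta := \eta_\delta$.

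At a successor stage $\beta + 1 < \delta$, assume that the gathered cell complex $Z_\beta$ and the equivalence $\eta_\beta$ have been constructed. Composing the topological attaching map $\phi'_{\beta+1} \colon \partial I^{n_{\beta+1}} \to Z'_\beta$ with $\eta_\beta$ yields a continuous map $\tilde\phi \colon \partial I^{n_{\beta+1}} \to TZ_\beta$. Equipping $\partial I^{n_{\beta+1}}$ with a gathered cell structure compatible with its subspace diffeology from $\R^{n_{\beta+1}}$, Theorem~\ref{apprximate on cell} supplies a smooth tame map $\phi_{\beta+1} \colon \partial I^{n_{\beta+1}} \to Z_\beta$ together with a continuous homotopy $T\phi_{\beta+1} \simeq \tilde\phi$. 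Set $Z_{\beta+1} := Z_\beta \cup_{\phi_{\beta+1}} I^{n_{\beta+1}}$ in $\Diff$; this remains a gathered cell complex by construction.

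To produce $\eta_{\beta+1}$, note that by Proposition~\ref{prp:topological cell complex} we have $TZ_{\beta+1} \cong TZ_\beta \cup_{T\phi_{\beta+1}} I^{n_{\beta+1}}$ as topological pushouts. Since $\partial I^n \hookrightarrow I^n$ is a closed cofibration in $\Top$, the gluing lemma for homotopy pushouts, applied to the homotopy equivalence $\eta_\beta$ together with the homotopy $\tilde\phi \simeq T\phi_{\beta+1}$, yields a homotopy equivalence $\eta_{\beta+1} \colon Z'_{\beta+1} \to TZ_{\beta+1}$; invoking the HEP for the cofibration $Z'_\beta \hookrightarrow Z'_{\beta+1}$ we may rigidify this so that $\eta_{\beta+1}$ strictly restricts to $\eta_\beta$ on $Z'_\beta$. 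At a limit ordinal $\beta < \delta$, set $Z_\beta := \mbox{colim}_{\alpha<\beta} Z_\alpha$ in $\Diff$; since $T$ preserves colimits and the filtrations $\{Z'_\alpha\}$, $\{TZ_\alpha\}$ are towers of closed cofibrations, the strictly compatible family $\{\eta_\alpha\}_{\alpha<\beta}$ assembles into a continuous map $\eta_\beta$ which is a homotopy equivalence by a standard transfinite argument.

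The principal obstacle is the successor step: equipping $\partial I^{n_{\beta+1}}$ with a gathered cell structure fit to serve as source for Theorem~\ref{apprximate on cell} is not immediate, because the naive cubical face structure has face-inclusion attaching maps that fail to be tame for $n \geq 3$; one must instead use a modified structure (for example a two-cell hemispherical decomposition of $S^{n-1} \cong \partial I^n$ with tame characteristic map, or a cubical decomposition whose characteristic maps are flattened near boundaries) and verify compatibility with the subspace diffeology. Once this preparatory construction is in place, the pushout gluing, HEP-based rigidification, and transfinite limit assembly all proceed routinely.
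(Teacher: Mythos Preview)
Your transfinite-induction skeleton (build $Z_\beta$ and a compatible family $\eta_\beta$, use the gluing lemma at successor stages, take colimits at limit stages) is exactly the paper's strategy. The divergence is in the tool used at the successor step, and this creates a genuine gap.

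You invoke Theorem~\ref{apprximate on cell} to approximate $\tilde\phi = \eta_\beta \circ \phi'_{\beta+1}$, claiming it yields a ``smooth tame map $\phi_{\beta+1}$''. But Theorem~\ref{apprximate on cell} only produces a \emph{smooth} map, not a \emph{tame} one; tameness (constancy in each coordinate near the faces of $\partial I^{n_{\beta+1}}$) is precisely what Definition~\ref{dfn:gathered cell complex} requires of the attaching maps, and smoothness with respect to the subspace diffeology on $\partial I^n$ does not imply it. So even granting a gathered structure on $\partial I^n$, your $Z_{\beta+1}$ is not obviously gathered. Moreover, the obstacle you yourself flag---manufacturing a gathered cell structure on $\partial I^n$ compatible with its subspace diffeology---is real and not dispatched by the sketches you offer.

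The paper sidesteps both difficulties at once by using the more primitive Proposition~\ref{approximation on cube} in place of Theorem~\ref{apprximate on cell}. That proposition takes a cube (and, applied face by face relative to the already-handled boundary, the cubical complex $\partial I^n$) as its source and outputs a \emph{tame} map homotopic to the given continuous one; no auxiliary cell structure on $\partial I^n$ is needed. Once you make this substitution, the remainder of your argument---pushout of $T$, gluing lemma for homotopy equivalences (the paper cites \cite[7.5.5, 7.5.7]{Brown}), rigidification to make $\eta_{\beta+1}|_{Z'_\beta} = \eta_\beta$, and the colimit step---coincides with the paper's proof.
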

\begin{proof}
Since $Z_{0}$ is the empty set,
$Z_{1}$ is a point.
We shall inductively construct a gathered cell complex $Z_{\beta}$ and 
a continuous homotopy equivalence $f_{\beta} \colon Z^{\prime}_{\beta} \to TZ_{\beta}$ such that 
$f_{\beta}| Z^{\prime}_{\alpha}=f_{\alpha} \colon Z^{\prime}_{\alpha} \to TZ_{\alpha}$ holds for each $\alpha < \beta$.

For each $\beta$ such that $\beta+1 < \delta$,
suppose we have a gathered cell complex $Z_{\beta}$ and a continuous homotopy 
equivalence $f_{\beta} \colon Z^{\prime}_{\beta} \to TZ_{\beta}$.
Then there exist homotopies $H^{\prime}_{\beta} \colon Z^{\prime}_{\beta} \times I \to Z^{\prime}_{\beta}$ 
and $H_{\beta} \colon TZ_{\beta} \times I \to TZ_{\beta}$ satisfying
$
1 \simeq g_{\beta}f_{\beta}
$
and
$
1 \simeq f_{\beta}g_{\beta}
$,
where $g_{\beta} \colon TZ_{\beta} \to Z^{\prime}_{\beta}$ is a homotopy inverse.
Then we have the following commutative diagram consisting of pushout squares
\[
\xymatrix@C=50pt{
\partial I^{n_{\beta+1}}
\ar[r]^{\phi^{\prime}_{\beta+1}}
\ar[d]
&
Z^{\prime}_{\beta}
\ar[r]^{f_{\beta}}
\ar[d]
&
TZ_{\beta}
\ar[d]
\\
I^{n_{\beta+1}}
\ar[r]_{\Phi_{\beta+1}^{\prime}}
&
Z^{\prime}_{\beta+1}
\ar@<-0.5ex>[r]_(0.4){ \tilde{f}_{\beta}}
&
TZ_{\beta} \cup_{f_{\beta} \phi^{\prime}_{\beta+1}}I^{n_{\beta+1}},
\ar@<-0.5ex>@{.>}[l]_(0.6){\exists \tilde{g}_{\beta}}
}
\]
where $\phi^{\prime}_{\beta+1}$ and $\Phi^{\prime}_{\beta+1}$ are attaching and characteristic maps,
respectively.
There exists the homotopy inverse $\tilde{g}_{\beta}$ of $\tilde{f}_{\beta}$ by {\cite[7.5.7]{Brown}}.
Then we can construct homotopies $1 \simeq \tilde{g}_{\beta}\tilde{f}_{\beta}$ and $1 \simeq \tilde{f}_{\beta} \tilde{g}_{\beta}$ extending $H^{\prime}_{\beta}$ and $H_{\beta}$,
respectively.
By Proposition \ref{approximation on cube} there exists a tame map 
$\phi_{\beta+1} \colon \partial I^{n_{\beta+1}} \to Z_{\beta}$ satisfying
\begin{eqnarray}\label{homotopic}
f_{\beta} \circ \phi^{\prime}_{\beta+1} \simeq T\phi_{\beta+1} \colon \partial I^{n_{\beta+1}} \to TZ_{\beta}.
\end{eqnarray}
We define $Z_{\beta+1}$ by the following pushout square
\[
\xymatrix{
\partial I^{n_{\beta+1}}
\ar[d]
\ar[r]^{\phi_{\beta+1}}
&
Z_{\beta}
\ar[d]
\\
I^{n_{\beta+1}}
\ar[r]_{\Phi_{\beta+1}}
&
Z_{\beta+1}.
}
\]
Since the functor $T$ preserves colimits,
and since the pair $(TI^{n_{\beta+1}}, T\partial I^{n_{\beta+1}})$ is homeomorphic to 
the pair $(I^{n_{\beta+1}}, \partial I^{n_{\beta+1}})$ by {\cite[Lemma 3.17]{DGE}},
the adjunction space $TZ_{\beta+1}$ is obtained by gluing $I^{n_{\beta+1}}$ to $TZ_{\beta}$ along an attaching map 
$T\phi_{\beta+1}$.
Then there is a homotopy equivalence
\[
\iota \colon TZ_{\beta} \cup_{f_{\beta}\phi^{\prime}_{\beta+1}}I^{n_{\beta+1}} \to TZ_{\beta+1}\ \mbox{rel} \ TZ_{\beta}
\]
by the conditions {\cite[7.5.5]{Brown}} and  (\ref{homotopic}).
Let 
$
\iota^{\prime} \colon TZ_{\beta+1} \to TZ_{\beta} \cup_{f_{\beta}\phi^{\prime}_{\beta+1}}I^{n_{\beta+1}}
$
be the homotopy inverse of $\iota$.
Let us define homotopy equivalences $f_{\beta+1} \colon Z^{\prime}_{\beta+1} \to TZ_{\beta+1}$ and 
$g_{\beta+1} \colon TZ_{\beta+1} \to Z^{\prime}_{\beta+1}$ by the compositions 
$\iota \circ \tilde{f}_{\beta}$ and $\tilde{g}_{\beta} \circ \iota^{\prime}$,
respectively.
\[
\xymatrix@C=50pt{
Z^{\prime}_{\beta+1}
\ar@<-0.5ex>[r]_(0.4){\tilde{f}_{\beta}}
&
TZ_{\beta} \cup_{f_{\beta}\phi^{\prime}_{\beta+1}}I^{n_{\beta}}
\ar@<-0.5ex>[r]_(0.6){\iota}
\ar@<-0.5ex>[l]_(0.6){\tilde{g}_{\beta}}
&
TZ_{\beta+1}
\ar@<-0.5ex>[l]_(0.39){\iota^{\prime}}
}
\]
Then we can construct homotopies 
\[
H^{\prime}_{\beta+1}\colon Z^{\prime}_{\beta+1}\times I \to Z^{\prime}_{\beta+1}\ \mbox{and}\ 
H_{\beta+1}\colon TZ_{\beta+1} \times I \to TZ_{\beta+1}
\]
satisfying 
$1 \simeq g_{\beta+1}f_{\beta+1}$ and $1 \simeq f_{\beta+1}g_{\beta+1}$
which can be extended to $H^{\prime}_{\beta}$ and $H_{\beta}$,
respectively.

Let $\beta$ be a limit ordinal of $\delta$.
Suppose for each $\alpha < \beta$,
there exist a gathered cell complex $Z_{\alpha}$ and a continuous homotopy equivalence 
$f_{\alpha} \colon Z^{\prime}_{\alpha} \to TZ_{\alpha}$ such that the restriction $f_{\alpha}|Z^{\prime}_{\tilde{\alpha}}$
is a homotopy equivalence $f_{\tilde{\alpha}} \colon Z^{\prime}_{\tilde{\alpha}} \to TZ_{\tilde{\alpha}}$ 
for each $\tilde{\alpha} < \alpha$.
Then we have a gathered cell complex $Z_{\beta}$ defined by $\mbox{colim}_{\alpha<\beta}Z_{\alpha}$ 
and a continuous homotopy equivalence 
$f_{\beta}=\mbox{colim}_{\alpha < \beta} f_{\alpha} \colon Z^{\prime}_{\beta} \to TZ_{\beta}$.

Therefore we have a gathered cell complex $X=\mbox{colim}Z$ given by 
the $\delta$-sequence $Z \colon \delta \to \Diff$ and a continuous homotopy equivalence
$\mbox{colim}f_{\beta} \colon X^{\prime} \to TX$.
\end{proof}
%
%
\section{Partition of unity on smooth cell complexes}
{
  In this section we show that every smooth cell complex admits a partition of unity by smooth functions.
}

Let $X$ be a diffeological space.
If $\psi \colon X \to \mathbf{R}$ is a real-valued smooth map,
the support of $\psi$,
denoted by supp$\psi$,
{
is the closure, with respect to the $D$-topology, of the set of points such that $\psi$ is non-zero:
\[
\mbox{supp} \psi =
\overline{
\{x \in X \mid \psi(x) \not =0 \}
}.
\]
}
A collection of subsets of $X$ is called locally finite if each $x \in X$ has a $D$-open neighbourhood that intersects
{
with only finitely many members of
of the collection.}
Let $\mathcal{U}=\{A_{\lambda}\}_{\lambda \in \Lambda}$ be an arbitary $D$-open cover of $X$.
We say that a collection $\{\psi_{\lambda} \colon X \to \mathbf{R}\mid \lambda \in \Lambda \}$ 
is a partition of unity subordinate to $\mathcal{U}$ if the following conditions are satisfied

\begin{enumerate}
\item
$0 \leq \psi_{\lambda}(x) \leq 1$ for all $\lambda \in \Lambda$ and all $x \in X$
\item
supp$\psi_{\lambda} \subset A_{\lambda}$ for all $\lambda \in \Lambda$
\item
the set $\{ \mbox{supp}\psi_{\lambda} \mid \lambda \in \Lambda \}$ of supports is locally finite and
\item
$\sum_{\lambda} \psi_{\lambda}(x)=1$ for all $x \in X$.
\end{enumerate}

In particular,
any diffeological subcartesian space has a partition of unity subordinate to arbitary $D$-open cover (cf.~{\cite[Theorem 3.3]{H1}).
\begin{lmm}\label{smooth function}
Let $X$ be a smooth cell complex given by a $\delta$-sequence $Z \colon \delta \to \Diff$,
where $\delta$ is a countable ordinal.
Let $A$ and $B$ be $D$-closed and $D$-open sets of $X$,
respectively,
such that $A \subset  B$ holds.
Then there exists a smooth map $f \colon X \to I$ such that $f \equiv 1$ on $A$ and supp$f \subset B$.
\end{lmm}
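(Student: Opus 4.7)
The plan is to construct $f$ by transfinite induction along the $\delta$-sequence $Z \colon \delta \to \Diff$. Since $TX$ is paracompact and Hausdorff by Proposition~\ref{prp:paracompactum} (using that $\delta$ is countable), it is normal, so I first choose a $D$-open set $B_{0}$ with $A \subset B_{0} \subset \overline{B_{0}} \subset B$.  It then suffices to build compatible smooth maps $f_{\beta} \colon Z_{\beta} \to I$ satisfying $f_{\beta} \equiv 1$ on $A \cap Z_{\beta}$ and $f_{\beta} \equiv 0$ on $Z_{\beta} \setminus B_{0}$, because the colimit $f := \colim_{\beta} f_{\beta}$ is then smooth with $\mathrm{supp}\, f \subset \overline{B_{0}} \subset B$.

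The base case ($Z_{0} = \emptyset$) is vacuous.  At a limit ordinal $\beta$, compatibility of $\{f_{\alpha}\}_{\alpha < \beta}$ together with $Z_{\beta} \cong \colim_{\alpha < \beta} Z_{\alpha}$ lets me take $f_{\beta} := \colim_{\alpha < \beta} f_{\alpha}$, which is smooth since $\Diff$ is cocomplete and inherits the prescribed pointwise conditions.

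The successor step is the heart of the argument.  Given $f_{\beta}$, set $\varphi := f_{\beta} \circ \phi_{\beta+1} \colon \bI^{n_{\beta+1}} \to I$, $F := \Phi_{\beta+1}^{-1}(A)$, and $B_{0}' := \Phi_{\beta+1}^{-1}(B_{0})$ in $I^{n_{\beta+1}}$.  By the inductive hypothesis $\varphi \equiv 1$ on $F \cap \bI^{n_{\beta+1}}$ and $\varphi \equiv 0$ on $\bI^{n_{\beta+1}} \setminus B_{0}'$, while $F \subset B_{0}'$ because $A \subset B_{0}$.  I must now construct a smooth map $g \colon I^{n_{\beta+1}} \to I$ extending $\varphi$ such that $g \equiv 1$ on $F$ and $g \equiv 0$ on $I^{n_{\beta+1}} \setminus B_{0}'$; then $f_{\beta+1} := f_{\beta} \cup g$ descends to a smooth map on the pushout $Z_{\beta+1}$.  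To build $g$, I exploit that $I^{n_{\beta+1}}$ is a subcartesian diffeological space and so admits smooth partitions of unity (cf.~\cite[Theorem 3.3]{H1}): produce a smooth bump $h$ equal to $1$ on $F$ with support in $B_{0}'$, extend $\varphi$ to a smooth $\tilde{\varphi}$ on an inner collar of $\bI^{n_{\beta+1}}$, and blend $\tilde{\varphi}$ with $h$ by a smooth cutoff vanishing away from the collar.

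The main obstacle is this blending, both because $\bI^{n_{\beta+1}}$ has corners (forcing a face-by-face construction of the inner collar inside the ambient $\mathbf{R}^{n_{\beta+1}}$) and because the three required conditions on $g$ must hold simultaneously throughout the overlap between $F$, the collar, and the region where $\tilde{\varphi}$ has been defined.  The compatibility of boundary data noted above resolves the overlap, since on $F \cap \bI^{n_{\beta+1}}$ both $\tilde{\varphi}$ and $h$ equal $1$, while on $\bI^{n_{\beta+1}} \setminus B_{0}'$ both vanish, so any smooth convex combination of the two preserves these identities.  The diffeological smoothness of $\varphi$ across lower-dimensional strata ensures that the face-wise collar extensions of $\varphi$ glue consistently, completing the induction.
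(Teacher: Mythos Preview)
Your transfinite-induction scheme matches the paper's, but the successor step has a genuine gap. With $g = \chi\tilde{\varphi} + (1-\chi)h$ (where $\chi$ is $1$ on $\bI^{n_{\beta+1}}$ and supported in the collar), you verify $g \equiv 1$ on $F$ and $g \equiv 0$ off $B_0'$ only at points of $\bI^{n_{\beta+1}}$. Inside the collar, where $0 < \chi < 1$, the set $F = \Phi_{\beta+1}^{-1}(A)$ can contain points $x$ with $\tilde{\varphi}(x) \neq 1$: the collar extension $\tilde{\varphi}$ is built from $\varphi$ alone and knows nothing about $F$ away from the boundary. At such $x$ the convex combination is strictly less than $1$, so $g \equiv 1$ on $F$ fails; the same problem occurs for $g \equiv 0$ off $B_0'$. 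The underlying issue is that your inductive invariant ``$f_\beta \equiv 1$ exactly on $A \cap Z_\beta$'' is too tight---there is no open cushion on which $\varphi$ is already $1$, so you cannot arrange the collar extension to agree with the required value $1$ near $F$.

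The paper avoids this by strengthening the invariant: it carries along $D$-open sets $M_\beta$ with $A \cap Z_\beta \subset M_\beta \subset \overline{M}_\beta \subset B \cap Z_\beta$ and $f_\beta \equiv 1$ on $\overline{M}_\beta$. At the successor step one chooses $M'_{\beta+1} \subset I^{n_{\beta+1}}$ containing $\Phi_{\beta+1}^{-1}(A)$, with closure in $\Phi_{\beta+1}^{-1}(B)$, \emph{and} with $M'_{\beta+1} \cap \bI^{n_{\beta+1}} \subset \phi_{\beta+1}^{-1}(M_\beta)$; this last condition makes the assignment ``$\varphi$ on $\bI^{n_{\beta+1}}$, value $1$ on $\overline{M'_{\beta+1}}$'' well defined and already smooth on that closed set. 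One then extends by Tietze and smooths by the relative Whitney approximation theorem rel $\bI^{n_{\beta+1}} \cup \overline{M'_{\beta+1}}$, controlling the support inside $\Phi_{\beta+1}^{-1}(B)$. No collar is needed, which also sidesteps your secondary difficulty: extending a merely diffeologically smooth $\varphi$ from the cornered space $\bI^{n_{\beta+1}}$ to a smooth $\tilde{\varphi}$ on an inner collar is not a standard fact, and your appeal to ``face-wise extensions glue consistently'' is unjustified when the attaching maps are not assumed tame.
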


\begin{proof}
Let $\beta_{0}$ be the minimum value $\beta< \delta$ satisfying $Z_{\beta} \cap A \not = \emptyset$.
If $\beta_{0}=0$ holds,
we define the constant map $f_{\beta_{0}} \colon Z_{\beta_{0}} \to I$ with the value $1 \in I$.
Let $\beta_{0}>0$ and let $\Phi_{\beta_{0}} \colon I^{n_{\beta_{0}}} \to Z_{\beta_{0}}$ be a characteristic map.
Then there exists an open set $M^{\prime}_{\beta_{0}}$ of $I^{n_{\beta_{0}}}$ such that
\[
\Phi_{\beta_{0}}^{-1}(A) \subset M^{\prime}_{\beta_{0}} \subset \overline{M^{\prime}}_{\beta_{0}} \subset 
\mbox{Int} I^{n_{\beta_{0}}} \cap \Phi_{\beta_{0}}^{-1}(B),
\]
since $I^{n_{\beta_{0}}}$ is a normal space.
There exists a smooth map $f^{\prime}_{\beta_{0}} \colon I^{n_{\beta_{0}}} \to I$ such that
\[
f^{\prime}_{\beta_{0}}|\overline{M^{\prime}}_{\beta_{0}} \equiv 1,\
\mbox{supp}f^{\prime}_{\beta_{0}} \subset \mbox{Int}I^{n_{\beta_{0}}} \cap \Phi^{-1}_{\beta_{0}}(B)
\]
by {\cite[Corollary 2.19]{Lee}}.
Since $i_{\beta_{0}} \cup \Phi_{\beta_{0}}$ is a subduction,
there exists a smooth map $f_{\beta_{0}} \colon Z_{\beta_{0}} \to I$ such that the following diagram commutes
\[
\xymatrix@C=50pt{
Z_{\beta_{0}-1} \coprod I^{n_{\beta_{0}}}
\ar[r]^(0.65){C_{0} \cup f^{\prime}_{\beta_{0}}}
\ar[d]_{i_{\beta_{0}} \cup \Phi_{\beta_{0}}}
&
I
\\
Z_{\beta_{0}},
\ar@{.>}[ru]_{\exists f_{\beta_{0}}}
}
\]
where $C_{0} \colon Z_{\beta_{0}-1} \to I$ is the constant map with the value $0 \in I$.
Let $M_{\beta_{0}}$ be a $D$-open set $\Phi_{\beta_{0}}(M^{\prime}_{\beta_{0}})$ of $Z_{\beta_{0}}$.
Then we have the following conditions
\begin{enumerate}
\item
$
Z_{\beta_{0}} \cap A \subset M_{\beta_{0}} \subset \overline{M}_{\beta_{0}}=\Phi_{\beta_{0}}(\overline{M^{\prime}}_{\beta_{0}})
\subset Z_{\beta_{0}} \cap B,
$
\item
$
f_{\beta_{0}} \equiv 1
$
on $\overline{M}_{\beta_{0}}$ and
\item
$\mbox{supp}f_{\beta_{0}} \subset Z_{\beta_{0}} \cap B$.
\end{enumerate}
Let $\beta_{0} < \beta$ and suppose there are a $D$-open set $M_{\beta}$ of $Z_{\beta}$ and a smooth map $f_{\beta} \colon Z_{\beta} \to I$ 
satisfying the following conditions
\begin{enumerate}
\item
$
Z_{\beta} \cap A \subset M_{\beta} \subset \overline{M}_{\beta} \subset Z_{\beta} \cap B,\
Z_{\beta -1} \cap M_{\beta}=M_{\beta-1},
$
\item
$f_{\beta}  \equiv 1$ on $\overline{M}_{\beta}$ and
\item
$\mbox{supp}f_{\beta} \subset Z_{\beta} \cap B$.
\end{enumerate}
Let $\phi_{\beta+1} \colon \partial I^{n_{\beta+1}} \to Z_{\beta}$ and $\Phi_{\beta+1} \colon I^{n_{\beta+1}} \to Z_{\beta+1}$ be attaching and characteristic maps,
respectively.
Then there exists a $D$-open set $M^{\prime}_{\beta+1}$ of $I^{n_{\beta +1}}$ satisfying
\[
\Phi^{-1}_{\beta+1}(A) \subset M^{\prime}_{\beta +1} \subset \overline{M^{\prime}}_{\beta+1} \subset \Phi^{-1}_{\beta+1}(B)\ \mbox{and} \ 
M^{\prime}_{\beta+1} \cap \partial I^{n_{\beta+1}} \subset \phi_{\beta+1}^{-1}(M_{\beta})
\]
since $I^{n_{\beta+1}}$ is a normal space.
Let us define a smooth map
\[
f^{\prime}_{\beta+1} \colon \partial I^{n_{\beta+1}} \cup \overline{M^{\prime}}_{\beta+1} \to I
\]
by the formula
\[
f^{\prime}_{\beta+1}(t)=
\left\{
\begin{array}{llll}
f_{\beta} \phi_{\beta+1}(t) & t \in \partial I^{n_{\beta+1}} \\
0 & t \in \overline{M^{\prime}}_{\beta+1}.
\end{array}
\right.
\]
Then,
by Tietze extension theorem (cf.~\cite{Urysohn}),
there exists a continuous function $f^{\prime \prime}_{\beta+1} \colon I^{n_{\beta+1}} \to I$ such that it extends $f^{\prime}_{\beta+1}$ and 
$\mbox{supp}f^{\prime \prime}_{\beta+1} \subset \Phi^{-1}_{\beta+1}(B)$ holds. 
Moreover there exists a smooth map $f^{\prime \prime \prime} \colon I^{n_{\beta+1}} \to I$ satisfying 
\[
f^{\prime \prime \prime} \simeq f^{\prime \prime}_{\beta+1} \ (\mbox{rel}\ \partial I^{n_{\beta+1}} \cup \overline{M^{\prime}}_{\beta+1}) \
\mbox{and}\ \mbox{supp}f^{\prime \prime \prime}_{f_{\beta+1}} \subset \Phi_{\beta+1}^{-1}(B),
\]
by the Whitney approximation theorem (cf.~{\cite[Theorem 6.19]{Lee}}).
Now we have a smooth map $f_{\beta+1} \colon Z_{\beta+1} \to I$ extending $f_{\beta}$ and making the following diagram commutative
\[
\xymatrix@C=50pt{
Z_{\beta} \coprod I^{n_{\beta+1}}
\ar[d]_{i_{\beta+1} \cup \Phi_{\beta+1}}
\ar[r]^(0.65){f_{\beta} \cup f_{\beta+1}^{\prime \prime \prime}}
&
I
\\
Z_{\beta+1}.
\ar@{.>}[ru]_{\exists f_{\beta+1}}
}
\]
Let $M_{\beta+1}$ be the adjunction space $M_{\beta} \cup_{\phi_{\beta+1}} M^{\prime}_{\beta+1}$.
Then it is $D$-open in $Z_{\beta+1}$ and the closure $\overline{M}_{\beta+1}$ is 
$\overline{M}_{\beta} \cup_{\phi_{\beta+1}} \overline{M^{\prime}}_{\beta+1}$.
Now,
we get the following conditions
\begin{enumerate}
\item
$
Z_{\beta+1} \cap A \subset M_{\beta +1} \subset \overline{M}_{\beta +1} \subset Z_{\beta +1} \cap B,\
Z_{\beta} \cap M_{\beta+1}=M_{\beta},
$
\item
$f_{\beta +1}  \equiv 1$ on $\overline{M}_{\beta+1}$ and
\item
$\mbox{supp}f_{\beta+1} \subset Z_{\beta+1} \cap B$.
\end{enumerate}
Therefore we can construct a smooth map $f \colon X \to I$ given by $\mbox{colim} f_{\beta}$.
Then it satisfies $f \equiv 1$ on $A$ and $\mbox{supp}f \subset B$.
\end{proof}
A topological space is called a paracompactum if it is a paracompact Hausdorff space.
We know that a topological CW complex is a paracompactum (cf.~{\cite[Theorem 1.3.5]{FP}}).
Now we have the following.
\begin{prp}\label{prp:paracompactum}
Let $X$ be a smooth cell complex given by a $\delta$-sequence $Z \colon \delta \to \Diff$,
where $\delta$ is a countable ordinal.
Then $TX$ is a paracompactum.
\end{prp}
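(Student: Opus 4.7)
The plan is to verify three classical properties of $TX$ --- $T_{1}$, normality, and the Lindel\"of property --- and then invoke the standard theorem that every regular Lindel\"of space is paracompact to conclude. Proposition~\ref{prp:topological cell complex} already provides a topological $\delta$-sequence $TZ \colon \delta \to \Top$ presenting $TX$, and Lemma~\ref{smooth function} supplies exactly the Urysohn-type separation one needs.

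I would first check that $TX$ is $T_{1}$ by transfinite induction along $TZ$. For a successor $\beta+1$ and any $x \in TZ_{\beta+1}$, the preimage of $\{x\}$ under the quotient map $TZ_{\beta} \coprod I^{n_{\beta+1}} \to TZ_{\beta+1}$ is either $\{x\} \coprod \phi_{\beta+1}^{-1}(x)$ (when $x \in TZ_{\beta}$) or a singleton in the open cell; both are closed by the inductive hypothesis and continuity of $\phi_{\beta+1}$, so $\{x\}$ is closed in $TZ_{\beta+1}$. Limit steps and passage to $TX$ cause no trouble because $T$ preserves colimits and a filtered colimit of $T_{1}$ spaces along closed inclusions is $T_{1}$.

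Next I would apply Lemma~\ref{smooth function} twice. For distinct $x, y \in TX$, take $A = \{x\}$ (now $D$-closed) and $B = TX \setminus \{y\}$ to obtain a smooth $f \colon X \to I$ with $f(x) = 1$ and $f(y) = 0$; the $D$-open sets $f^{-1}([0,1/3))$ and $f^{-1}((2/3,1])$ then give a Hausdorff separation. For disjoint $D$-closed sets $A_{0}, A_{1}$, take $A = A_{0}$ and $B = TX \setminus A_{1}$ to obtain a smooth Urysohn function, so $TX$ is normal; together with $T_{1}$ this yields regularity.

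Finally, since $\delta$ is countable there are only countably many successor ordinals $\beta+1 < \delta$, and for each such $\beta$ the image $\Phi_{\beta+1}(I^{n_{\beta+1}})$ is compact in $TX$. Unwinding the transfinite construction (once more using that $T$ preserves colimits) exhibits $TX$ as the union of these countably many compact sets, so $TX$ is $\sigma$-compact and a fortiori Lindel\"of. Combining regularity with Lindel\"ofness via the classical theorem that regular Lindel\"of spaces are paracompact finishes the proof. I do not anticipate a genuine obstacle: the only mildly delicate step is the transfinite bookkeeping at limit ordinals, which is handled uniformly by the colimit-preservation of $T$.
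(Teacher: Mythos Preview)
Your argument is correct and logically independent of the paper's proof, but it takes a genuinely different route. The paper proceeds by a direct transfinite induction on the paracompactness of the stages: assuming $TZ_{\beta}$ is a paracompactum, the topological pushout description of $TZ_{\beta+1}$ (obtained from $T$ preserving colimits and $(TI^{n},T\partial I^{n}) \cong (I^{n},\partial I^{n})$) lets one cite a standard result (James, Exercise~7.18) that adjoining a cell to a paracompactum yields a paracompactum; the passage to $TX = \colim TZ_{\beta}$ then invokes a result of Fritsch--Piccinini on expanding sequences of paracompacta. In contrast, you establish the separation axioms globally via Lemma~\ref{smooth function} and then exploit countability of $\delta$ to get $\sigma$-compactness, hence Lindel\"ofness, and conclude by the Morita--Michael theorem that regular Lindel\"of spaces are paracompact. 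Your approach is more self-contained in that it replaces the two external citations by the paper's own Lemma~\ref{smooth function} and elementary point-set topology; on the other hand, the paper's inductive argument does not rely on Lemma~\ref{smooth function} at all and isolates exactly where countability enters (only at the final colimit step), whereas your $\sigma$-compactness argument uses countability more pervasively. Both are valid; yours is arguably the more elementary one given what has already been proved in the paper.
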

\begin{proof}
Suppose $Z_{\beta}$ is a paracompactum for each $\beta \geq 0$.
Since tha functor $T$ preserves the colimits and the pair
$(T I^{n_{\beta+1}}, T \partial I^{n_{\beta+1}})$ is homeomorphic to the pair $(I^{n_{\beta+1}},\partial I^{n_{\beta+1}})$
by {\cite[Lemma 3.17]{DGE}},
we have the following pushout square
\[
\xymatrix{
\partial I^{n_{\beta+1}}
\ar[d]
\ar[r]^{T\phi_{\beta+1}}
&
TZ_{\beta}
\ar[d]^{Ti_{\beta+1}}
\\
I^{n_{\beta+1}}
\ar[r]_{T\Phi_{\beta+1}}
&
TZ_{\beta+1}.
}
\]
Then $TZ_{\beta+1}$ is a paracompactum by {\cite[Exercise 7.18]{James}}.
Therefore we get a paracompactum $X=\mbox{colim}Z$ by {\cite[Proposition A.5.1]{FP}}.
\end{proof}
\begin{thm}\label{thm:partition of unity}
Let $X$ be a smooth cell complex given by a $\delta$-sequence $Z \colon \delta \to \Diff$,
where $\delta$ is a countable ordinal.
Then for any $D$-open cover $\mathcal{U}=\{U_{\lambda}\}_{\lambda \in \Lambda}$ of $X$,
there exists a partition of unity $\{ \psi_{\lambda} \colon X \to \mathbf{R} \mid \lambda \in \Lambda \}$ 
subordinate to $\mathcal U$.
\end{thm}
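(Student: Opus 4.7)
The plan is to combine paracompactness of $TX$ (Proposition~\ref{prp:paracompactum}) with the bump-function construction of Lemma~\ref{smooth function}, following the classical partition-of-unity argument but carried out in the diffeological setting.

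First I would use paracompactness of $TX$ to pass from $\mathcal{U}$ to a locally finite $D$-open refinement $\{V_\mu\}_{\mu \in M}$, together with an index map $\mu \mapsto \lambda(\mu) \in \Lambda$ such that $V_\mu \subset U_{\lambda(\mu)}$. Then, by the standard shrinking lemma for paracompact Hausdorff spaces, I would obtain a further $D$-open cover $\{W_\mu\}_{\mu \in M}$ with $\overline{W_\mu} \subset V_\mu$ for every $\mu$ (closures taken in the $D$-topology). For each $\mu$, applying Lemma~\ref{smooth function} with $A = \overline{W_\mu}$ and $B = V_\mu$ produces a smooth map $f_\mu \colon X \to I$ with $f_\mu \equiv 1$ on $\overline{W_\mu}$ and $\mbox{supp}\, f_\mu \subset V_\mu$.

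Next I would normalize. Since each $\mbox{supp}\, f_\mu$ lies in the locally finite family $\{V_\mu\}$, the family of supports is itself locally finite; since $\{W_\mu\}$ covers $X$, the sum $\sum_\nu f_\nu$ is everywhere bounded below by $1$. Setting $\psi_\mu := f_\mu / \sum_\nu f_\nu$ gives a smooth $[0,1]$-valued map with $\mbox{supp}\, \psi_\mu \subset V_\mu \subset U_{\lambda(\mu)}$, satisfying $\sum_\mu \psi_\mu \equiv 1$. Aggregating via $\psi_\lambda := \sum_{\lambda(\mu) = \lambda} \psi_\mu$ (again a locally finite sum of smooth functions) yields the required partition subordinate to $\mathcal{U}$.

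The main obstacle I anticipate is confirming that the classical argument genuinely transfers into the diffeological framework. Two points require care: (a) that the shrinking can be performed in the $D$-topology, which is fine thanks to Proposition~\ref{prp:paracompactum}; and (b) that a locally finite sum of smooth real-valued maps on a diffeological space is itself smooth. For (b), given any plot $p \colon U \to X$ and any $u \in U$, one chooses a $D$-open neighbourhood of $p(u)$ meeting only finitely many of the $\mbox{supp}\, f_\nu$; its preimage under $p$ is open (by definition of the $D$-topology, which makes plots continuous), and on this open set $\sum_\nu f_\nu \circ p$ reduces to a finite sum of smooth real-valued functions, hence is smooth. The same reasoning applies to $1/\sum_\nu f_\nu$ (using that $\sum_\nu f_\nu \geq 1$) and to the aggregation step, completing the argument.
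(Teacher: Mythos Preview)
Your argument is correct and follows essentially the same route as the paper: invoke Proposition~\ref{prp:paracompactum} to refine and shrink the cover, apply Lemma~\ref{smooth function} to produce smooth bump functions, then normalize. The only cosmetic difference is that the paper passes directly to a \emph{precise} locally finite shrinking $\{U'_\lambda\}_{\lambda\in\Lambda}$ with $\overline{U'_\lambda}\subset U_\lambda$ (same index set), thereby avoiding your reindexing/aggregation step; your version, with the intermediate refinement $\{V_\mu\}$ and the explicit verification that locally finite sums of smooth maps are smooth, is in fact a bit more carefully stated than the paper's own proof.
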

\begin{proof}
Since $TX$ is a paracompactum by Proposition \ref{prp:paracompactum},
there exists a locally finite $D$-open cover $\{U^{\prime}_{\lambda}\}$ such that 
$U^{\prime}_{\lambda} \subset \overline{U^{\prime}}_{\lambda} \subset U_{\lambda}$ holds 
for each $\lambda \in \Lambda$.
Then we get a smooth map $f_{\lambda} \colon X \to I$ such that supp$f_{\lambda} \subset U_{\lambda}$ 
and $f_{\lambda} \equiv 1$ on $\overline{U^{\prime}}_{\lambda}$.
Define a smooth map $\psi_{\lambda} \colon X \to \mathbf{R}$ by 
\[
\psi_{\lambda}(x)=
\frac{f_{\lambda}(x)}
{
\sum_{\lambda \in \Lambda} f_{\lambda}(x)
}.
\]
Then 
$
\{\psi_{\lambda} \colon X \to \mathbf{R} \mid \lambda \in \Lambda \}
$
is a partition of unity subordinate to $\mathcal{U}$.
\end{proof}


\end{document}